\newcommand{\B}{\mathcal B}
\newcommand{\R}{\mathbb R}
\newcommand{\N}{\mathbb N}
\newcommand{\T}{\mathcal T}
\newcommand{\F}{\mathcal F}
\newtheorem{theorem}{Theorem}
\newtheorem{df}{Definition}
\newtheorem{lemma}{Lemma}
\newtheorem{problem}{Problem}
\begin{document}

\title{Some remarks on characterization of t-normed integrals on compacta}

%(revisited version 1 (August 2021))

\author{Taras Radul}

\maketitle

Institute of Mathematics, Casimirus the Great University of Bydgoszcz, Poland;
\newline
Department of Mechanics and Mathematics, Ivan Franko National University of Lviv,
Universytettska st., 1. 79000 Lviv, Ukraine.
\newline
e-mail: tarasradul@yahoo.co.uk

\textbf{Key words and phrases:}  Capacity,   t-normed integral

\subjclass[MSC 2020]{ 28E10}

\begin{abstract} A characterization of t-normed integrals was obtained in   \cite{CLM} for finite compacta and in \cite{Rad} for the general case. Such characterization establishes a correspondence between the space of  capacities and homogeneous respect t-norm monotone normed functionals preserving the maximum operation of comonotone functions. In fact these theorems we can consider as non-additive and non-linear analogues of well-known  Riesz Theorem about a correspondence between the set of $\sigma$-additive regular Borel measures and the set of linear  positively defined functionals. We discuss optimality of such characterization.
 \end{abstract}

\maketitle

\section{Introduction}

Capacities (non-additive measures, fuzzy measures) were introduced by Choquet in \cite{Ch} as a natural generalization of additive measures. They found numerous applications (see for example \cite{EK},\cite{Gil},\cite{Sch}). Capacities on compacta were considered in \cite{Lin} where the important role plays the upper-semicontinuity property which  connects the capacity theory with the topological structure. Categorical and topological properties of spaces of upper-semicontinuous normed capacities on compact Hausdorff spaces were investigated in \cite{NZ}.

In fact, the most of applications of non-additive measures to game theory, decision making theory, economics etc deal not with measures as set functions  but with integrals which allow to obtain expected utility or expected pay-off.  Several types of integrals with respect to non-additive measures were developed for different purposes (see for example \cite{Grab}, \cite{KM}, \cite{LMOS}, and  \cite{Den}). Such integrals are called fuzzy integrals. The most known are the Choquet integral based on the addition and the mninimum operations \cite{Ch} and the Sugeno integral  based on the maximum and the minimum operations \cite{Su}. If we change the minimum operation by any t-norm, we obtain the  generalization of the Sugeno integral called t-normed integrals \cite{Sua}. One of the important problems of the fuzzy integrals theory is characterization of integrals as functionals on some function space (see for example subchapter 4.8 in \cite{Grab} devoted to characterizations of the Choquet integral and the Sugeno integral). A characterization of t-normed integrals was obtained in   \cite{CLM} for finite compacta and in \cite{Rad} for the general case.

It was remarked in \cite{Grab} that in particular case, when we consider the Sugeno integral on finite sets, some of conditions used in the characterization theorem in \cite{CLM} are superfluous and   a simpler characterization of the Sugeno integral on finite sets was given. Another simpler characterization of the Sugeno integral  was obtained in \cite{Ma} for finite sets and  \cite{Nyk} or \cite{R} for any compactum.

The main aim of this paper is to analyze if we can generalize one of mentioned simplification to the general case of t-normed integral changing the minimum operation by any t-norm. We will answer this question in negative. We obtain in Section 2 a characterization of the Sugeno integral which is simpler then one in \cite{Ma}. But  we show in Section 3 that the characterization from \cite{Ma} can not be generalized for any t-normed integral even for the finite case. However we generalize the characterization of Sugeno integral given in \cite{Grab} for any t-normed integral and any compactum in Section 4. Finally, we discuss an open problem in Section 5.

\section {Capacities   and t-normed integrals}

In what follows, all spaces are assumed to be compacta (compact Hausdorff space) except for $\R$ and the spaces of continuous functions on a compactum. All maps are assumed to be continuous. By $\F(X)$ we denote the family of all closed subsets of a compactum $X$.

We shall denote the
Banach space of continuous functions on a compactum  $X$ endowed with the sup-norm by $C(X)$. For any $c\in\R$ we shall denote the
constant function on $X$ taking the value $c$ by $c_X$. We also consider the natural lattice operations $\vee$ and $\wedge$ ( on $C(X)$ and  its sublattices $C(X,[0,+\infty))$ and $C(X,[0,1])$.

We need the definition of capacity on a compactum $X$. We follow a terminology of \cite{NZ}.

\begin{df} A function $\nu:\F(X)\to [0,1]$  is called an {\it upper-semicontinuous capacity} on $X$ if the three following properties hold for each closed subsets $F$ and $G$ of $X$:

1. $\nu(X)=1$, $\nu(\emptyset)=0$,

2. if $F\subset G$, then $\nu(F)\le \nu(G)$,

3. if $\nu(F)<a$ for $a\in[0,1]$, then there exists an open set $O\supset F$ such that $\nu(B)<a$ for each compactum $B\subset O$.
\end{df}

By $MX$ we denote the space $MX$ of all upper-semicontinuous  capacities on a compactum $X$

 Remind that a triangular norm $\ast$ is a binary operation on the closed unit interval $[0,1]$ which is associative, commutative, monotone and $s\ast 1=s$ for each  $s\in [0,1]$ \cite{PRP}. Let us remark that the monotonicity of $\ast$ implies distributivity, i.e. $(t\vee s)\ast l=(t\ast l)\vee (s\ast l)$ for each $t$, $s$, $l\in[0,1]$.  We consider only continuous t-norms in this paper.

Integrals generated  by  t-norms are called t-normed integrals and were studied in \cite{We1}, \cite{We2} and \cite{Sua}. Denote $\varphi_t=\varphi^{-1}([t,1])$ for each $\varphi\in C(X,[0,1])$ and $t\in[0,1]$. So, for a continuous t-norm $\ast$, a capacity $\mu$ and a  function $f\in  C(X,[0,1])$ the corresponding t-normed integral is defined by the formula $$\int_X^{\vee\ast} fd\mu=\max\{\mu(f_t)\ast t\mid t\in[0,1]\}.$$

Let $X$ be a compactum.  We call two functions $\varphi$, $\psi\in C(X,[0,1])$ comonotone (or equiordered) if $(\varphi(x_1)-\varphi(x_2))\cdot(\psi(x_1)-\psi(x_2))\ge 0$ for each $x_1$, $x_2\in X$. Let us remark that a constant function is comonotone to any function $\psi\in C(X,[0,1])$.

Let $\ast$ be a continuous t-norm. We denote for a compactum $X$ by $\T^\ast(X)$ the set of functionals $\mu:C(X,[0,1])\to[0,1]$ which satisfy the conditions:

\begin{enumerate}
\item $\mu(1_X)=1$ ($\mu$ is normed);
\item $\mu(\varphi)\le\mu(\psi)$ for each functions $\varphi$, $\psi\in C(X,[0,1])$ such that $\varphi\le\psi$ ($\mu$ is monotone) ;
\item $\mu(\psi\vee\varphi)=\mu(\psi)\vee\mu(\varphi)$ for each comonotone functions $\varphi$, $\psi\in C(X,[0,1])$ ($\mu$ preserves $\vee$ for  comonotone functions);
\item $\mu(c_X\ast\varphi)=c\ast\mu(\varphi)$ for each $c\in[0,1]$ and $\varphi\in C(X,[0,1])$ ($\mu$ is $\ast$-homogeneous).

\end{enumerate}

It was proved in \cite{CLM} for finite compacta and in \cite{Rad} for the general case  that  a   functional $\mu$ on  $C(X,[0,1])$ belongs to $\T^\ast(X)$ if and only if there exists a unique capacity $\nu$ such that $\mu$ is the t-normed integral with respect to $\nu$.

Let us remark that the above characterization can be simplified in the particular case for the Sugeno integral (when $\ast=\wedge$). We can replace Property 3 by a weaker condition:  $\mu(c_X\vee\varphi)=c\vee\mu(\varphi)$ for each $c\in[0,1]$ and $\varphi\in C(X,[0,1])$ (see \cite{Ma} for finite sets and  \cite{Nyk} for any compactum. See also \cite{R} where  some modification of  the Sugeno integral was considered).

The following lemma implies another simplification of this characterization of the Sugeno integral.

\begin{lemma}\label{mon} Let $X$ be a compactum and $\mu:C(X)\to [0,1]$ be a  $\vee$-homogeneous functional. Then $\mu$ is normed. If $\mu$ is additionally  $\wedge$-homogeneuos, then it is  monotone.
\end{lemma}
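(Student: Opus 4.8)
The plan is to dispatch normedness immediately, then collect the elementary consequences of the two homogeneities, and finally attack monotonicity, where the real difficulty lies. For normedness I would simply take $c=1$ in the $\vee$-homogeneity $\mu(c_X\vee\varphi)=c\vee\mu(\varphi)$: since $1_X\vee\varphi=1_X$ for every $\varphi\in C(X,[0,1])$ and $\mu(\varphi)\le 1$, this forces $\mu(1_X)=1\vee\mu(\varphi)=1$.

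Assume in addition the $\wedge$-homogeneity $\mu(c_X\wedge\varphi)=c\wedge\mu(\varphi)$. First I would record the facts that make monotonicity plausible. Capping $1_X$ gives $\mu(c_X)=\mu(c_X\wedge 1_X)=c\wedge 1=c$ for every $c$; taking $c=\min\varphi$ in the $\vee$-law and $c=\max\varphi$ in the $\wedge$-law (both leave $\varphi$ unchanged) yields the sandwich $\min\varphi\le\mu(\varphi)\le\max\varphi$. Two one-sided comparisons drop out at once: if $\varphi\le c_X$ then $c_X\vee\varphi=c_X$, so $\mu(\varphi)\le c$; dually, if $c_X\le\psi$ then $\mu(\psi)\ge c$. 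Moreover $\mu(c_X\wedge\varphi)=c\wedge\mu(\varphi)\le\mu(\varphi)$ and $\mu(c_X\vee\varphi)\ge\mu(\varphi)$, so $\mu$ is already monotone along any constant cap or lift of a fixed function.

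These reductions cut the general claim $\varphi\le\psi\Rightarrow\mu(\varphi)\le\mu(\psi)$ down to one awkward case. If $\varphi$ and $\psi$ are separated by a constant, that is $\max\varphi\le\min\psi$, I pick $c$ in between and conclude $\mu(\varphi)\le c\le\mu(\psi)$ from the one-sided comparisons. In general, writing $t=\mu(\varphi)$ and $s=\mu(\psi)$ and replacing $\varphi,\psi$ by $(\varphi\wedge t_X)\vee s_X$ and $(\psi\wedge t_X)\vee s_X$ — which by the two laws again have $\mu$-values $t$ and $s$ and preserve $\varphi\le\psi$ — I may assume $\varphi,\psi$ take values in $[s,t]$ with $\mu(\varphi)=t=\max\varphi$ and $\mu(\psi)=s=\min\psi$. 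It then remains to rule out the \emph{overlapping} configuration in which the smaller function still attains the larger value $t$.

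The hard part will be exactly this overlapping case, because $\vee$- and $\wedge$-homogeneity only ever relate a function to its own constant caps and lifts and never directly compare two distinct functions. Assuming $t>s$ I would fix $c\in(s,t)$ and try to produce a single auxiliary function whose value under $\mu$ is pinned \emph{both} above $c$, via a lift coming from the top level of $\varphi$, \emph{and} below $c$, via a cap coming from the bottom level of $\psi$; this is precisely the squeeze that yields a contradiction in the finite prototypes on two and three points, where a lone level value is forced to lie on both sides of $c$. The obstacle is that on a general compactum a function need not reduce to finitely many levels, so I expect to need the topology. The route I would fall back on is to build a capacity from $\mu$ by setting, for closed $F$, $\nu(F)=\inf\{\mu(u)\mid u\in C(X,[0,1]),\ u|_F\equiv 1\}$; this $\nu$ is monotone by construction, and the remaining, genuinely substantial, step is to derive from the two homogeneities — using compactness and upper semicontinuity — the representation $\mu(\varphi)=\max\{t\wedge\nu(\varphi_t)\mid t\in[0,1]\}$. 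Monotonicity is then immediate, since $\varphi\le\psi$ gives $\varphi_t\subseteq\psi_t$, hence $\nu(\varphi_t)\le\nu(\psi_t)$ termwise and $\mu(\varphi)\le\mu(\psi)$.
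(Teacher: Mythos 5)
Your normedness argument is correct (and is the paper's), and your preliminary reductions are all valid: $\mu(c_X)=c$, the sandwich $\min\varphi\le\mu(\varphi)\le\max\varphi$, the one-sided comparisons, and the passage to $\varphi'=(\varphi\wedge t_X)\vee s_X$, $\psi'=(\psi\wedge t_X)\vee s_X$. But the proof is not complete: the ``overlapping case'' you isolate is exactly where the whole content of the lemma lies, and you leave it as a plan. The squeeze with an auxiliary function is only something you ``would try''; the fallback you offer instead --- define $\nu(F)=\inf\{\mu(u)\mid u|_F\equiv 1\}$ and derive $\mu(\varphi)=\max\{t\wedge\nu(\varphi_t)\mid t\in[0,1]\}$ from the two homogeneities --- is not a step in a proof of the lemma but the full Sugeno characterization itself (Theorem \ref{?}, i.e.\ the results of \cite{Ma}, \cite{Nyk} with the hypotheses weakened). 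You explicitly label it ``genuinely substantial'' and do not prove it; moreover, the known proofs of that representation take monotonicity (and normedness) as hypotheses, and in this paper the characterization is deduced \emph{from} the present lemma, so as a route to the lemma it is both unproven and circular.

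The obstacle that pushed you to the fallback --- ``a function need not reduce to finitely many levels'' --- is illusory: the auxiliary function you wanted exists on every compactum by a Urysohn/Tietze argument, and this is precisely the paper's proof. In the paper's orientation take $\psi\le\varphi$ and suppose $b=\mu(\varphi)<a=\mu(\psi)$; pick $b<c<d<a$. Since $\psi\le\varphi$, the closed sets $\varphi^{-1}([0,c])$ and $\psi^{-1}([d,1])$ are disjoint, so there is a continuous $\xi$ with $\xi=\varphi$ on $\varphi^{-1}([0,c])$, $\xi=\psi$ on $\psi^{-1}([d,1])$, and $\xi\in[c,d]$ elsewhere; concretely $\xi=(\varphi\wedge u)\vee\psi$, where $u\in C(X,[c,d])$ is a Urysohn function equal to $c$ on the first set and to $d$ on the second. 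Then $\xi\wedge c_X=\varphi\wedge c_X$ and $\xi\vee d_X=\psi\vee d_X$, so $\wedge$-homogeneity gives $\mu(\xi)\wedge c=\mu(\varphi)\wedge c=b<c$, forcing $\mu(\xi)=b$, while $\vee$-homogeneity gives $\mu(\xi)\vee d=\mu(\psi)\vee d=a>d$, forcing $\mu(\xi)=a$ --- a contradiction. This two-line construction is the missing piece; note also that it needs no reduction to $[s,t]$-valued functions and no capacity at all.
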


\begin{proof} Let $\mu:C(X)\to [0,1]$ be a  $\vee$-homogeneous functional. We have $\mu(1_X)=\mu(1_X\vee 1_X)=1\vee\mu(1_X)=1$. Hence $\mu$ is normed.

Now, let $\mu:C(X)\to [0,1]$ be a  $\vee$-homogeneous and $\wedge$-homogeneous functional.
Take any $\varphi$, $\psi\in C(X,[0,1])$ such that $\psi\le\varphi$. Suppose the contrary $\mu(\varphi)=b<a=\mu(\psi)$. Choose $c,$ $d\in [0,1]$ such that $b<c<d<a$.

Since $\psi\le\varphi$, we have $$\varphi^{-1}([0,c])\cap\psi^{-1}([d,1])=\emptyset.$$ Choose a function $\xi\in C(X,[0,1])$ such that $$\xi|_{\varphi^{-1}([0,c])}=\varphi|_{\varphi^{-1}([0,c])}\text{, }\xi|_{\psi^{-1}([d,1])}=\psi|_{\psi^{-1}([d,1])}$$ and $$\xi(X\setminus(\varphi^{-1}([0,c])\cup\psi^{-1}([d,1])))\subset [c,d].$$
Then we have $$\xi\wedge c_X=\varphi\wedge c_X\text{ and }\xi\vee d_X=\psi\vee d_X.$$
Since $\mu$ is $\wedge$-homogeneous, we have  $$\mu(\xi)\wedge c=\mu(\xi\wedge c_X)=\mu(\varphi\wedge c_X)=\mu(\varphi)\wedge c=b\wedge c=b,$$
what implies $\mu(\xi)=b.$

On the other hand, using $\vee$-homogeneity of $\mu$ we obtain $$\mu(\xi)\vee d=\mu(\xi\vee d_X)=\mu(\psi\vee d_X)=\mu(\psi)\vee d=a\vee d=a,$$
what implies $\mu(\xi)=a.$ We have a contradiction.
\end{proof}

So, we obtain a further simplification of characterization of the Sugeno integral.

\begin{theorem}\label{?} A   functional $\mu:C(X,[0,1])\to[0,1]$ is $\vee$-homogeneous and $\wedge$-homogeneous if and only if there exists a unique capacity $\nu$ such that $\mu$ is the Sugeno integral with respect to $\nu$.
\end{theorem}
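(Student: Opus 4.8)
The plan is to deduce the statement from Lemma~\ref{mon} together with the simplified characterization of the Sugeno integral in which Property~3 of $\T^\wedge(X)$ is replaced by $\vee$-homogeneity (the result of \cite{Ma}, \cite{Nyk}). The key observation is that $\vee$- and $\wedge$-homogeneity already force the two remaining defining conditions, normedness and monotonicity, so that these need not be assumed separately. In other words, the present theorem is obtained from the known simplification by discarding Properties~1 and~2 as hypotheses and recovering them from the algebraic conditions via Lemma~\ref{mon}.

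For the necessity direction I would argue as follows. If $\mu$ is the Sugeno integral with respect to a capacity $\nu$, then by the characterization of \cite{CLM}, \cite{Rad} applied with $\ast=\wedge$ the functional $\mu$ lies in $\T^\wedge(X)$; in particular Property~4 says exactly that $\mu$ is $\wedge$-homogeneous. The $\vee$-homogeneity is the content of the simplified characterization of \cite{Ma}, \cite{Nyk}, which guarantees that a Sugeno integral satisfies $\mu(c_X\vee\varphi)=c\vee\mu(\varphi)$. Hence $\mu$ is both $\vee$- and $\wedge$-homogeneous. Alternatively, both identities can be verified directly from the formula $\mu(\varphi)=\max\{\nu(\varphi_t)\wedge t\mid t\in[0,1]\}$, using distributivity of $\wedge$ over $\vee$ together with the behaviour of the sublevel sets $\varphi_t$ under $\vee$ and $\wedge$ with a constant.

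For the sufficiency direction I would start from a $\vee$- and $\wedge$-homogeneous functional $\mu$ and check that it meets the hypotheses of the simplified characterization. First, Lemma~\ref{mon} shows that $\mu$ is normed and, using both homogeneity properties, monotone, which are Properties~1 and~2. Second, $\wedge$-homogeneity is precisely Property~4 for the t-norm $\ast=\wedge$. Third, $\vee$-homogeneity is exactly the condition replacing Property~3 in \cite{Ma}, \cite{Nyk}. Since all hypotheses of that characterization are fulfilled, it yields a capacity $\nu$ with $\mu=\int_X^{\vee\wedge}(\cdot)\,d\nu$, and uniqueness of $\nu$ is inherited from the same characterization (the values $\nu(F)$ on closed sets are determined by $\mu$, so two capacities inducing the same functional coincide).

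I do not expect a deep obstacle here, since the substantive work is already carried by Lemma~\ref{mon} and the cited results; the theorem is essentially a repackaging that trades the order-theoretic hypotheses for the algebraic ones. The only point requiring care, and the place where I would expect any friction, is the bookkeeping of domains: Lemma~\ref{mon} is stated for functionals on $C(X)$, so I would confirm that all the auxiliary data used in its proof, namely the constants $c_X$, the interpolating function $\xi$, and the lattice combinations formed from $\varphi$ and $\psi$, remain inside $C(X,[0,1])$. Then the lemma applies verbatim to the restriction of $\mu$ to $C(X,[0,1])$, and no extension of $\mu$ beyond its given domain is needed.
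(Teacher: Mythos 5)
Your proposal is correct and follows exactly the route the paper intends: Theorem~\ref{?} is stated as an immediate consequence of Lemma~\ref{mon} (which recovers normedness and monotonicity from the two homogeneity properties) combined with the simplified characterization of \cite{Ma}, \cite{Nyk}, and your handling of the domain issue (the lemma's proof only ever uses functions lying in $C(X,[0,1])$, so it applies to functionals defined on that sublattice) is the right bookkeeping. Nothing is missing.
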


We will show that we can not generalize the above theorem for any t-norm, moreover we can not even generalize above mentioned results from \cite{Ma}  and  \cite{Nyk} for any t-norm. Let us formulate it more precisely. Let $\ast$ be a continuous t-norm.
Denote for a compactum $X$ by $\T_1^\ast(X)$ the set of  monotone $\vee$-homogeneous and $\ast$-homogeneous  functionals $\mu:C(X,[0,1])\to[0,1]$. The following question arises naturally: if $\mu\in\T_1^\ast(X)$ implies that there exists a unique capacity $\nu$ such that $\mu$ is the t-normed integral with respect to $\nu$? We answer this question in negative in the next section  building a functional $\mu\in\T_1^\cdot(\{1,2,3\})\setminus\T^\cdot(\{1,2,3\})$, where $\cdot$ is usual multiplication on $[0,1]$.

\section{An example of  monotone $\cdot$-homogeneuos and $\vee$-homogeneous functional which does not preserve $\vee$ for  comonotone functions.}

The main aim of this section
is to build a functional $\mu\in\T_1^\cdot(\{1,2,3\})\setminus\T^\cdot(\{1,2,3\})$, where $\cdot$ is usual multiplication on $[0,1]$.

We will develop some technique of extending of  monotone $\cdot$-homogeneuos and $\vee$-homogeneous functionals defined on some subsets of $C(X,[0,1])$ for a compactum $X$. By $C_0$ we denote the subset of $C(X,[0,1])$ consisting of all constant functions.

\begin{df} A subset $A\subset C(X,[0,1])$ is called a $(\vee,\cdot)$-subspace of  $C(X,[0,1])$ if $C_0\subset A$ and $c_X\vee \psi\in A$, $c_X\cdot \psi\in A$ for each $c\in [0,1]$ and $\psi\in A$.
\end{df}

We will use simpler denotations $c\vee \psi$, $c\cdot \psi$ instead $c_X\vee \psi$, $c_X\cdot \psi$ and $c\le(\ge)\psi$ instead $c_X\le(\ge)\psi$ in the following.

The following theorem can be considered as partial idempotent version of Hahn-Banach Theorem and the proof uses the main idea of the proof of Hahn-Banach Theorem.

\begin{theorem}\label{hb1} Let $\mu:A\to [0,1]$ be a  monotone $\cdot$-homogeneuos and $\vee$-homogeneous functional where $A$ is a $(\vee,\cdot)$-subspace of  $C(X,[0,1])$. Then there exists a  monotone $\cdot$-homogeneuos and $\vee$-homogeneous functional $\mu_1:C(X,[0,1])\to [0,1]$ such that $\mu_1|A=\mu$.
\end{theorem}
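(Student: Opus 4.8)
The plan is to follow the Hahn--Banach scheme: use Zorn's lemma to pass to a maximal extension and then show that a strictly smaller domain always admits a one-step enlargement. Concretely, I would order the pairs $(A',\mu')$, where $A'$ is a $(\vee,\cdot)$-subspace with $A\subseteq A'\subseteq C(X,[0,1])$ and $\mu'\colon A'\to[0,1]$ is a monotone, $\cdot$-homogeneous and $\vee$-homogeneous functional extending $\mu$, by declaring $(A',\mu')\preceq(A'',\mu'')$ iff $A'\subseteq A''$ and $\mu''|A'=\mu'$. Unions of chains are again such pairs, so Zorn gives a maximal element $(A_0,\mu_0)$. It then remains to prove $A_0=C(X,[0,1])$, and for this I assume there is $\varphi_0\in C(X,[0,1])\setminus A_0$ and derive a contradiction by extending $\mu_0$ past $\varphi_0$.

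First I would describe the subspace generated by $A_0\cup\{\varphi_0\}$. Since the unary operations $t\mapsto c\vee t$ and $t\mapsto c\cdot t$ generate under composition exactly the maps $t\mapsto b\vee(a\cdot t)$ with $a,b\in[0,1]$ (an easy induction), this subspace is $A_0\cup B$ with $B=\{\,b\vee(a\cdot\varphi_0)\mid a,b\in[0,1]\,\}$; indeed $A_0\cup B$ is closed under both operations and contains $C_0$. Any $\vee$- and $\cdot$-homogeneous extension must satisfy $\mu_0(b\vee(a\cdot\varphi_0))=b\vee(a\cdot\alpha)$ with $\alpha:=\mu_0(\varphi_0)$, so the whole extension is governed by the single value $\alpha$, and the task reduces to choosing $\alpha$ so that this formula is well defined and the resulting functional stays monotone.

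The monotonicity check splits into comparisons inside $B$ and comparisons between $A_0$ and $B$. For the former, if $\alpha$ lies in $[\min_X\varphi_0,\max_X\varphi_0]$, then $b_1\vee a_1\varphi_0\le b_2\vee a_2\varphi_0$ forces $b_1\vee a_1\alpha\le b_2\vee a_2\alpha$ (a short verification that the two increasing functions $t\mapsto b_i\vee a_it$, once compared at $\min_X\varphi_0$ and $\max_X\varphi_0$, remain compared on the whole interval); the same remark gives well-definedness. The comparisons between $A_0$ and $B$ translate into a lower bound $L$ and an upper bound $U$ on $\alpha$: from $\psi\le b\vee a\varphi_0$ with $\mu_0(\psi)>b$ one needs $\alpha\ge\mu_0(\psi)/a$, and from $b\vee a\varphi_0\le\psi$ one needs $\alpha\le\mu_0(\psi)/a$.

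The main obstacle, exactly as in Hahn--Banach, is to show these bounds are compatible, i.e. $L\le U$. I would prove it by taking witnesses $\psi_1\le b_1\vee a_1\varphi_0$ (with $\mu_0(\psi_1)>b_1$) and $b_2\vee a_2\varphi_0\le\psi_2$, multiplying the first by $a_2$ and the second by $a_1$, and chaining them through $a_1a_2\varphi_0$ to reach the inequality $a_2\psi_1\le(a_2b_1)\vee(a_1\psi_2)$ between genuine elements of $A_0$. Applying $\mu_0$ and using monotonicity together with $\cdot$- and $\vee$-homogeneity yields $a_2\mu_0(\psi_1)\le(a_2b_1)\vee(a_1\mu_0(\psi_2))$; since $a_2\mu_0(\psi_1)>a_2b_1$, the maximum must be attained by the second term, giving $\mu_0(\psi_1)/a_1\le\mu_0(\psi_2)/a_2$, hence $L\le U$. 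A short estimate (using $\min_X\varphi_0\le\varphi_0\le\max_X\varphi_0$ and the equality $\mu_0(c_X)=c$, which follows from normedness in Lemma~\ref{mon} and $\cdot$-homogeneity) shows in addition that $L\le\max_X\varphi_0$ and $\min_X\varphi_0\le U$, so $[L,U]$ meets $[\min_X\varphi_0,\max_X\varphi_0]$. Choosing $\alpha$ in this intersection produces a legitimate monotone, $\cdot$-homogeneous and $\vee$-homogeneous extension of $\mu_0$ to $A_0\cup B\supsetneq A_0$, contradicting maximality. Therefore $A_0=C(X,[0,1])$ and $\mu_1:=\mu_0$ is the required extension.
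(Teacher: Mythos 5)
Correct, and essentially the paper's own approach: the same one-step Hahn--Banach extension over the generated subspace $A_0\cup\{\,b\vee(a\cdot\varphi_0)\mid a,b\in[0,1]\,\}$ with the forced formula $b\vee(a\cdot\alpha)$, the same cross-scaling of a lower witness against an upper witness (your direct proof of $L\le U$ via $a_2\psi_1\le(a_2b_1)\vee(a_1\psi_2)$ is exactly the paper's contradiction argument, except that multiplying each witness by the other's coefficient lets you avoid the paper's case split $s\le c$ versus $s>c$), and Zorn's lemma to finish. One point in your write-up is worth keeping: the requirement $\alpha\in[\min_X\varphi_0,\max_X\varphi_0]$, which is genuinely needed for well-definedness and for monotonicity between two elements of $B$, is passed over in the paper as ``easy to check''; it holds there only because the paper's choice $a=\inf H$ (your $U$) automatically lies in that interval, a fact the paper never states.
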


\begin{proof}
Consider any $(\vee,\cdot)$-subspace $A$ of  $C(X,[0,1])$ and a  monotone $\cdot$-homogeneuos and $\vee$-homogeneous functional $\mu:A\to [0,1]$. Let $\varphi\in C(X,[0,1])$. Put $A'=\{d\vee(c\cdot\varphi)\mid d,$ $c\in[0,1]\}\cup A$. It is easy to check that $A'$ is a $(\vee,\cdot)$-subspace of  $C(X,[0,1])$. We will build a  monotone $\cdot$-homogeneuos and $\vee$-homogeneous functional $\mu':A'\to [0,1]$ which is an extension of $\mu$.

Define a subset of $\R$    as follows  $$H=\{c^{-1}\cdot\mu(\psi)\mid c\in(0,1] \text{ and } \psi \in A \text{ such that } \psi \ge c\cdot \varphi\}.$$ Put $a=\inf H$. It is easy to check that  $a\in[0,1]$.

Define a functional $\mu':A'\to[0,1]$  as follows $\mu'(\psi)= \mu(\psi)$ for each $\psi\in A$ and $\mu'(d\vee(c\cdot\varphi))= d\vee(c\cdot a)$ for $d,$ $c\in[0,1]$. It is easy to see that $\mu'$ is a  $\cdot$-homogeneuos and $\vee$-homogeneous functional.

Let us show that $\mu'$ is monotone. It is obvious for a pair of functions from $A$. It is easy to check for a pair of functions from $\{d\vee(c\cdot\varphi)\mid d,$ $c\in[0,1]\}$.

Now, consider any $\psi\in A$ and $d,$ $c\in[0,1]$ such that $d\vee(c\cdot\varphi)\le\psi$.
We can assume that $c>0$.  We have $d\le\psi$ and $c\cdot\varphi\le \psi$.   We also have $c^{-1}\cdot\mu(\psi)\ge a$ by the choice of $a$. Since $\mu$ is monotone on $A$ and $C_0\subset A$, we have $\mu(\psi)\ge d$. Hence $\mu'(\psi)=\mu(\psi)\ge d\vee(c\cdot a)=\mu'(d\vee(c\cdot\varphi))$.

Conversely, consider  any $\psi\in A$ and $d,$ $c\in[0,1]$ such that $d\vee(c\cdot\varphi)\ge\psi$. We  can assume that $c>0$. Suppose the contrary $d\vee(c\cdot a)<\mu(\psi)$.  Since $a=\inf\{s^{-1}\cdot\mu(\xi)\mid s\in(0,1] \text{ and } \xi \in A \text{ such that } \xi \ge s\cdot \varphi\}<c^{-1}\mu(\psi)$, we can choose $\xi\in A$ such that $\xi\ge s\cdot \varphi$ and $s^{-1}\cdot\mu(\xi)<c^{-1}\cdot\mu(\psi)$.

If $s\le c$ we put $\psi'=s\cdot c^{-1}\cdot\psi\in A$. Then we have $\mu(\psi')=s\cdot c^{-1}\cdot\mu(\psi)$ and $s^{-1}\cdot\mu(\xi)<s^{-1}\cdot\mu(\psi')$. Hence $\mu(\xi)<\mu(\psi')$. Since $\xi\in A$, we have $\mu(s\cdot c^{-1}\cdot d\vee\xi)=s\cdot c^{-1}\cdot d\vee\mu(\xi)<\mu(\psi')$.  On the other hand $\psi'\le s\cdot c^{-1}\cdot d$ and $\psi'\le s\cdot\varphi\le \xi$. Hence $\psi'\le s\cdot c^{-1}\cdot  d\vee\xi$ and we obtain a contradiction with the monotonicity of $\mu$.

Now, let us consider $s>c$. Put $\xi'=c\cdot s^{-1}\cdot\xi\in A$. We have $c^{-1}\cdot\mu(\xi')=s^{-1}\cdot\mu(\xi)<c^{-1}\cdot\mu(\psi)$. Hence $\mu(\xi)<\mu(\psi')$ and $\mu(d\vee\xi')=d\vee\mu(\xi')<\mu(\psi)$. On the other hand $\psi\le d\vee c\cdot \varphi= d\vee c\cdot s^{-1}\cdot (s\cdot \varphi)\le d\vee c\cdot s^{-1}\cdot \xi=d\vee  \xi'$ and we obtain a contradiction with monotonicity of $\mu$.

Zorn Lemma implies existence of a  monotone $\cdot$-homogeneuos and $\vee$-homogeneous functional $\mu_1:C(X,[0,1])\to [0,1]$ such that $\mu_1|A=\mu$.
\end{proof}

Now we are able construct the announced example. By $X=\{1,2,3\}$ we denote 3-point space with the discrete topology.  Define two functions $\varphi_1$, $\varphi_2\in C(X,[0,1])=[0,1]^3$ as follows $\varphi_1(1)=0$, $\varphi_1(2)=1/3$, $\varphi_1(3)=2/3$ and $\varphi_2(1)=1/3$, $\varphi_2(2)=1/3$, $\varphi_2(3)=1$. Put $\varphi_3=\varphi_1\vee\varphi_2$. Then we have  $\varphi_3(1)=1/3$, $\varphi_3(2)=1/2$ and  $\varphi_3(3)=1$. Let us remark that $\varphi_1$ and $\varphi_2$ are comonotone.

For $i\in\{1,2,3\}$ define sets $H_i\subset C(X,[0,1])$ as follows $H_i=\{d\vee(c\cdot\varphi_i)\mid d,$ $c\in[0,1]\}$ and put $H=\cup_{i=1}^3H_i$. It is easy to check that $H$ is a $(\vee,\cdot)$-subspace of  $C(X,[0,1])$.

Put $m_1=m_2=1/3$ and $m_3=1/2$. Define a functional $\mu:H\to [0,1]$ by the formula $\mu(d\vee(c\cdot\varphi_i))=d\vee(c\cdot m_i)$ for $i\in\{1,2,3\}$ and $d,$ $c\in[0,1]\}$. It is easy to see that $\mu$ is   $\cdot$-homogeneuos and $\vee$-homogeneous. Let us show that $\mu$ is monotone. It is a routine checking for a pair of functions belonging to the same  $H_i$ and we omit is. Let us check it for functions from distinct $H_i$.

Let $\psi_1\in H_1$ and $\psi_2\in H_2$. Then we have $$\psi_1=d_1\vee(c_1\cdot\varphi_1),\text{ }  \psi_2=d_2\vee(c_2\cdot\varphi_2)$$ for some $d_1,$  $d_2,$ $c_1,$ $c_2\in[0,1]$ and $$\mu(\psi_1)=d_1\vee(c_1/3),\text{ }\mu(\psi_2)=d_2\vee(c_2/3).$$

Consider the case when $\psi_1\le\psi_2$. In particular, we have $$d_1\vee(c_1/3)\le d_1\vee(c_1/2)=\psi_1(2)\le\psi_2(2)=d_2\vee(c_2/3).$$
We obtain $\mu(\psi_1)\le\mu(\psi_2).$

Conversely, let $\psi_1\ge\psi_2$. In particular, we have $$d_1\vee(c_1/3)\ge d_1=\psi_1(1)\ge\psi_2(1)=d_2\vee(c_2/3).$$
Hence $\mu(\psi_1)\ge\mu(\psi_2).$

Let $\psi_1\in H_1$ and $\psi_3\in H_3$. Then we have $$\psi_1=d_1\vee(c_1\cdot\varphi_1),\text{ }  \psi_3=d_3\vee(c_3\cdot\varphi_3)$$ for some $d_1,$  $d_3,$ $c_1,$ $c_3\in[0,1]$ and $$\mu(\psi_1)=d_1\vee(c_1/3),\text{ }\mu(\psi_3)=d_3\vee(c_3/2).$$

Consider the case when $\psi_1\le\psi_3$. In particular, we have $$d_1\vee(c_1/3)\le d_1\vee(c_1/2)=\psi_1(2)\le\psi_3(3)=d_3\vee(c_3/2).$$
We obtain $\mu(\psi_1)\le\mu(\psi_3).$

Conversely, let $\psi_1\ge\psi_3$. In particular, we have $$d_1=\psi_1(1)\ge\psi_3(1)=d_3\vee(c_3/3).$$ Suppose the contrary $$d_1\vee(c_1/3)<d_3\vee(c_3/2).$$ The above  inequalities imply $$d_1<c_3/2=d_3\vee(c_3/2).$$ On the other hand we have $$d_1\vee(2c_2/3)=\psi_1(3)\ge\psi_3(3)=d_3\vee c_3=c_3.$$ Thus, we obtain  $2c_2/3\ge c_3$ or  $c_2/3\ge c_3/2$ and we have a contradiction. Hence $\mu(\psi_1)\ge\mu(\psi_3).$

Let $\psi_2\in H_2$ and $\psi_3\in H_3$. Then we have $$\psi_2=d_2\vee(c_2\cdot\varphi_2),\text{ }  \psi_3=d_3\vee(c_3\cdot\varphi_3)$$ for some $d_2,$  $d_3,$ $c_2,$ $c_3\in[0,1]\}$ and $$\mu(\psi_2)=d_2\vee(c_2/3),\text{ }\mu(\psi_3)=d_3\vee(c_3/2).$$

Then we have $$\mu(\psi_2)=\psi_2(2),\text{ and }\mu(\psi_3)=\psi_3(2).$$
Hence $\psi_2\le(\ge)\psi_3$ implies $\mu(\psi_2)\le(\ge)\mu(\psi_3)$ and we obtain that $\mu$ is monotone.

Let us remark that our arguments also imply that $\mu$ is correctly defined.

Thus the functional $\mu$  is   $\cdot$-homogeneuos, $\vee$-homogeneous and monotone. But we have $$\mu(\psi_1)\vee\mu(\psi_2)=1/3\ne1/2=\mu(\psi_3)=\mu(\psi_1\vee\psi_2),$$
hence $\mu$ does not preserve $\vee$ for  comonotone functions. By Theorem \ref{hb1} we can extend $\mu:H\to[0,1]$ to a   $\cdot$-homogeneuos, $\vee$-homogeneous and monotone functional $\mu_1:C(X,[0,1])\to[0,1]$ which does not preserve $\vee$ for  comonotone functions.

\section{Functionals which  preserve $\vee$ for  comonotone functions.}

Another characterization of the Sugeno integral on finite sets was given by Theorem 4.58 in \cite{Grab}. Let $X$ be a finite set and $|X|=n\in\N$. The characterization theorem is formulated for the Sugeno integral defined on  functions from $C(X,[0,+\infty))=[0,+\infty)^n$. Reformulating this theorem for functions from $C(X,[0,1])=[0,1]^n$ we obtain that a   functional $\mu:[0,1]^n\to[0,1]$ has the properties

\begin{enumerate}
\item $\mu(1_X)=1$;
\item $\mu$ preserves $\vee$ for  comonotone functions;
\item $\mu(c_X\wedge\chi_A)=c\wedge\mu(\chi_A)$ for each $c\in[0,1]$ and $A\subset X$ (by $\chi_A$ we denote the characteristic function of the set $A$).
\end{enumerate}

if and only if there exists a unique capacity $\nu$ on $X$ such that $\mu$ is the Sugeno integral with respect to $\nu$.

We can  rewrite the proof of Theorem 4.58 from \cite{Grab} to obtain its generalization.

\begin{theorem}\label{finite} Let $X$ be a finite set and $|X|=n\in\N$ and $\ast$ is a continuous t-norm. A   functional $\mu:[0,1]^n\to[0,1]$ has the properties

\begin{enumerate}
\item $\mu(1_X)=1$;
\item $\mu$ preserves $\vee$ for  comonotone functions;
\item $\mu(c_X\ast\chi_A)=c\ast\mu(\chi_A)$ for each $c\in[0,1]$ and $A\subset X$ (by $\chi_A$ we denote the characteristic function of the set $A$).
\end{enumerate}

if and only if there exists a unique capacity $\nu$ on $X$ such that $\mu$ is the t-normed integral with respect to $\nu$.
\end{theorem}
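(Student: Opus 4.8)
The plan is to prove the two implications separately, deriving the capacity directly from the values of $\mu$ on characteristic functions.

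First I would dispatch the necessity. If $\mu$ is the t-normed integral with respect to a capacity $\nu$, then by the characterization of $\T^\ast(X)$ recalled above (from \cite{CLM} and \cite{Rad}) $\mu$ belongs to $\T^\ast(X)$; hence $\mu$ is normed, preserves $\vee$ for comonotone functions, and is $\ast$-homogeneous. Restricting the $\ast$-homogeneity $\mu(c_X\ast\varphi)=c\ast\mu(\varphi)$ to the functions $\varphi=\chi_A$ gives condition (3), so all three conditions hold. For the converse, suppose $\mu$ satisfies (1)--(3) and set $\nu(A)=\mu(\chi_A)$ for $A\subset X$. I would check that $\nu$ is a capacity. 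Condition (1) gives $\nu(X)=\mu(1_X)=1$. Since $0\ast s=0$ for every t-norm (by monotonicity together with $0\ast 1=0$), we have $0_X\ast\chi_A=0_X$, so taking $c=0$ in (3) yields $\mu(0_X)=0\ast\mu(\chi_A)=0$, i.e.\ $\nu(\emptyset)=0$. For monotonicity note that if $A\subset B$ then $\chi_A$ and $\chi_B$ are comonotone with $\chi_A\vee\chi_B=\chi_B$, so (2) gives $\mu(\chi_B)=\mu(\chi_A)\vee\mu(\chi_B)$, that is $\nu(A)\le\nu(B)$. It is worth stressing that monotonicity of $\mu$ itself is not assumed; it is obtained only on nested characteristic functions, exactly where it is needed, from condition (2).

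The core of the argument is the identity $\mu(f)=\int_X^{\vee\ast}f\,d\nu$ for every $f\in[0,1]^n$. Here I would use the representation $f=\bigvee_t\,(t\ast\chi_{f_t})$, the join being taken over the finite range of $f$ and $f_t=f^{-1}([t,1])$; this holds pointwise because $(t\ast\chi_{f_t})(x)$ equals $t$ when $f(x)\ge t$ and $0$ otherwise. All the summands are monotone with respect to the common preorder $x\preceq y\iff f(x)\le f(y)$ (each level set $f_t$ is a $\preceq$-up-set and $t\ast(\cdot)$ preserves order), hence pairwise comonotone. I would then extend condition (2) from pairs to finite pairwise comonotone families by induction: a pairwise comonotone family is jointly monotone for a single total preorder (a violating pair would otherwise destroy the comonotonicity of two members), so all partial joins stay monotone for that preorder and remain comonotone with the next summand. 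Applying this extended form of (2) and then condition (3) with $c=t$ and $A=f_t$ gives $\mu(f)=\bigvee_t\,(t\ast\mu(\chi_{f_t}))=\max_t\,(t\ast\nu(f_t))$, which is precisely $\int_X^{\vee\ast}f\,d\nu$ once one remarks that between consecutive range values $f_t$ is constant and $t\ast\nu(f_t)$ is nondecreasing in $t$, so the maximum over $[0,1]$ reduces to the finite maximum over the range.

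Uniqueness is immediate: if $\mu=\int_X^{\vee\ast}(\cdot)\,d\nu'$, then evaluating on $\chi_A$ gives $\nu'(A)=\mu(\chi_A)=\nu(A)$, so $\nu'=\nu$. The hard part will be the passage from the two-function form of condition (2) to finite pairwise comonotone families; everything hinges on the common-ordering description of comonotonicity. The conceptual point that makes the theorem work --- and explains why condition (3), imposed only on characteristic functions rather than as full $\ast$-homogeneity, already suffices --- is that the decomposition of $f$ only ever uses the building blocks $t\ast\chi_{f_t}$, so $\ast$-homogeneity is needed solely on characteristic functions.
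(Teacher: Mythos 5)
Your proposal is correct, and it is essentially the argument the paper intends: the paper's ``proof'' is only the remark that Grabisch's proof of Theorem 4.58 can be rewritten with $\ast$ in place of $\wedge$, and that proof is precisely your argument --- define $\nu(A)=\mu(\chi_A)$, decompose $f=\bigvee_{t}\bigl(t\ast\chi_{f_t}\bigr)$ over the (finite) range of $f$, extend comonotone maxitivity to finite pairwise comonotone families by induction, and apply condition (3) to each block. Your write-up in fact supplies the details the paper omits (including the observation that on a finite set the upper-semicontinuity clause of the capacity definition follows from monotonicity), so nothing further is needed.
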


Let us consider the general case of any compactum. Since the characteristic function $\chi_A$ generally speaking is not continuous, we will use the $\ast$-homogeneity of functionals instead the second property from the above theorem.

For $A\in\F(X)$ put $\Upsilon_A=\{\varphi\in C(X,[0,1])\mid \varphi(a)=1$ for each $a\in A\}$. If $A=\emptyset$ we put $\Upsilon_A=C(X,[0,1])$.

\begin{lemma}\label{Comon} Let  $\varphi\in C(X,[0,1])$, $\delta<\xi<1$. Then there exists $\psi\in\Upsilon_{\varphi_\xi}$ such that $\psi|_{\varphi^{-1}[0,\delta]}=\varphi|_{\varphi^{-1}[0,\delta]}$ and $\psi$ is comonotone with $\varphi$.
\end{lemma}

\begin{proof} Consider the linear function $\kappa:[\delta,\xi]\to[1,\xi^{-1}]$ such that $\kappa(\delta)=1$ and $\kappa(\xi)=\xi^{-1}$.

Put
$$ \psi(x)=\begin{cases}
\varphi(x),&\varphi(x)\le \delta,\\
\kappa(\xi)\varphi(x),&\delta\le\varphi(x)\le \xi,\\
1,&\xi\le\varphi(x).\end{cases}$$

It is easy to check that $\psi'$ is a function we are looking for.
\end{proof}

The following characterization theorem is a modification of the above mentioned result from \cite{Rad}. In fact, we will see that the property of monotonicity is obsolete. The proof is also a modification of the proof given in \cite{Rad}.  We simply reduce using monotonicity to the pairs of comonotone functions.

We denote by $\B$ the set of functionals $I:C(X,[0,1])\to[0,1]$ which satisfy the properties

\begin{enumerate}
\item $\mu(1_X)=1$;
\item $I$ preserves $\vee$ for  comonotone functions;
\item $I(c_X\ast\varphi)=c\ast I(\varphi)$ for each $c\in[0,1]$ and $\varphi\in C(X,[0,1])$.
\end{enumerate}

\begin{theorem}\label{general} Let $X$ be a compactum and $\ast$ is a continuous t-norm. A   functional $I:C(X,[0,1])\to[0,1]$ belongs to the set $\B$ if and only if there exists a unique capacity $\nu$ on $X$ such that $I$ is the t-normed integral with respect to $\nu$.
\end{theorem}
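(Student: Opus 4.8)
The plan is to treat the two implications separately, concentrating the substance in the ``only if'' direction, where the announced point is that monotonicity need not be assumed.

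For the ``if'' direction, suppose $I(\varphi)=\int_X^{\vee\ast}\varphi\,d\nu=\max\{\nu(\varphi_t)\ast t\mid t\in[0,1]\}$ for some capacity $\nu$, and verify the three defining properties of $\B$ directly. Normalization is immediate from $\nu(X)=1$ and $s\ast1=s$. For $\ast$-homogeneity I would control the super-level sets of $c_X\ast\varphi$ by those of $\varphi$ and then pull the factor $c\ast(\cdot)$ through the maximum, using associativity, commutativity, continuity of $\ast$, and the distributivity remark $(t\vee s)\ast l=(t\ast l)\vee(s\ast l)$ recorded in Section 2. Preservation of $\vee$ for comonotone $\varphi,\psi$ is the standard observation that the families $\{\varphi_t\}$ and $\{\psi_t\}$ then form a chain, so $(\varphi\vee\psi)_t=\varphi_t\cup\psi_t$ equals the larger of $\varphi_t,\psi_t$ and the defining maximum splits as required.

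The ``only if'' direction is the core, and the guiding observation is that property (2) already supplies monotonicity along every comonotone pair: if $\varphi\le\psi$ are comonotone then $\psi=\varphi\vee\psi$, whence $I(\psi)=I(\varphi)\vee I(\psi)\ge I(\varphi)$. I would also record $I(0_X)=0\ast I(0_X)=0$ from $\ast$-homogeneity with $c=0$. I then recover the capacity by setting $\nu(F)=\inf\{I(\varphi)\mid\varphi\in\Upsilon_F\}$ for $F\in\F(X)$, and $\nu(\emptyset)=0$. Checking the capacity axioms should be routine: $\nu(X)=I(1_X)=1$; monotonicity of $\nu$ follows from $\Upsilon_G\subset\Upsilon_F$ when $F\subset G$; and upper semicontinuity is obtained by choosing, for $\nu(F)<a$, a function $\varphi\in\Upsilon_F$ with $I(\varphi)<a$, taking the open set $\varphi^{-1}((b,1])\supset F$ for $b$ close to $1$, and rescaling through $\ast$-homogeneity to dominate the members of $\Upsilon_B$ for compacta $B$ inside it.

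It then remains to prove the identity $I(\varphi)=\int_X^{\vee\ast}\varphi\,d\nu$ together with uniqueness. Here I would transcribe the argument of \cite{Rad}, with the single modification that every appeal to monotonicity is made only between comonotone functions; this is exactly the role of Lemma \ref{Comon}, which for given thresholds $\delta<\xi<1$ produces $\psi\in\Upsilon_{\varphi_\xi}$ agreeing with $\varphi$ on $\varphi^{-1}[0,\delta]$ and comonotone with $\varphi$, so the auxiliary functions approximating the level sets of $\varphi$ stay comonotone with it. I expect the main obstacle to be precisely this bookkeeping: verifying, in each of the two inequalities $I(\varphi)\le\int_X^{\vee\ast}\varphi\,d\nu$ and $I(\varphi)\ge\int_X^{\vee\ast}\varphi\,d\nu$, that the functions being compared can always be arranged to be comonotone, so that property (2) genuinely substitutes for the discarded monotonicity hypothesis of $\T^\ast(X)$. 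Uniqueness follows at the end, since any t-normed integral representing $I$ must assign to each closed $F$ the value $\inf\{I(\varphi)\mid\varphi\in\Upsilon_F\}$, which forces $\nu$.
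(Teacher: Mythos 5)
Your proposal takes essentially the same route as the paper's proof: the same capacity $\nu(F)=\inf\{I(\varphi)\mid \varphi\in \Upsilon_F\}$, the same use of Lemma \ref{Comon} combined with $\ast$-homogeneity to get upper semicontinuity and the inequality $\int_X^{\vee\ast}\varphi\, d\nu\le I(\varphi)$, and the same overall strategy of modifying the argument of \cite{Rad} by replacing every appeal to monotonicity with comonotone maxitivity applied to pairs made comonotone via Lemma \ref{Comon} and Lemma 4 of \cite{Rad}. The differences are only presentational: you verify sufficiency directly where the paper cites Lemma 4 of \cite{Rad}, and you leave as a ``transcription'' the finite-subcover contradiction argument that the paper writes out in full for the remaining inequality.
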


\begin{proof} Sufficiency is proved in Lemma 4 from \cite{Rad}.

Necessity. Take any $I\in\B$.  Define $\nu:\F(X)\to [0,1]$ as follows $\nu(A)=\inf\{I(\varphi)\mid \varphi\in \Upsilon_A\}$ if $A\ne\emptyset$ and $\nu(\emptyset)=0$. It is easy to see that $\nu$ satisfies Conditions 1 and 2 from the definition of capacity.

Let $\nu(A)<\eta$ for some $\eta\in [0,1]$ and $A\in\F(X)$. Then there exists $\varphi\in \Upsilon_A$ such that $I(\varphi)<\eta$. Choose $\beta\in [0,1]$ such that $I(\varphi)<\beta<\eta$. Since the operation $\ast$ is continuous,  $\eta\ast 1=\eta$ and $\eta\ast 0=0$, there is $\delta\in [0,1]$ such that $\eta\ast \delta=\beta$. Evidently, $\delta<1$. Choose $\zeta\in [0,1]$ such that $\delta<\zeta<1$. We can choose a function $\psi\in\Upsilon_{\varphi_\zeta}$ comonotone to $\varphi$ such that  $\psi|_{\varphi^{-1}([0,\delta])}=\varphi|_{\varphi^{-1}([0,\delta])}$ by Lemma \ref{Comon}. Then we have $\delta\ast\psi\le\varphi$ and $\delta\ast I(\psi)\le I(\varphi)<\beta=\delta\ast\eta$. Hence $I(\psi)<\eta$. Put $U=\psi^{-1}((\zeta,1])$. Evidently $U$ is open and $U\supset A$. We have $\nu(K)\le I(\psi)<\eta$ for each compactum $K\subset U$. Hence $\nu\in MX$.

Let us show that $\int_X^{\vee\ast} \varphi d\nu=I(\varphi)$ for each $\varphi\in C(X,[0,1])$.  We have $\int_X^{\vee\ast} \varphi d\nu=\max\{\inf\{I(\chi)\mid \chi\in \Upsilon_{\varphi_t}\}\ast t\mid t\in[0,1]\}=\max\{\inf\{I(t\ast\chi)\mid \chi\in \Upsilon_{\varphi_t}\}\mid t\in[0,1]\}$.

The inequality $\inf\{I(\chi)\mid \chi\in \Upsilon_{\varphi_t}\}\ast t\le I(\varphi)$ is obvious for each $t\in[0,I(\varphi)]$. Consider any $t>I(\varphi)$.  By Lemma \ref{Comon} for each $\delta<t$ we can choose a function $\chi^\delta\in\Upsilon_{\varphi_t}$ such that $\chi^\delta|_{\varphi^{-1}([0,\delta])}=\varphi|_{\varphi^{-1}([0,\delta])}$. Then we have $\delta\ast\chi^\delta\le\varphi$ and $\delta\ast I(\chi^\delta)\le I(\varphi)$.  Since the operation $\ast$ is continuous, $\inf\{I(\chi)\mid \chi\in \Upsilon_{\varphi_t}\}\ast t\le I(\varphi)$. Hence $\int_X^{\vee\ast} \varphi d\nu\le I(\varphi)$.

Suppose $b=\int_X^{\vee\ast} \varphi d\nu<I(\varphi)=a$. Put $m=\max_{x\in X}\varphi(x)$.  Then for each $t\in[0,m]$ there exists $\chi^t\in\Upsilon_{\varphi_t}$ such that $I(t\ast\chi^t)<a$.  We can assume that $\chi^t$ is comonotone with $\varphi$ by  Lemma 4 from \cite{Rad}. For each $t\in[0,m)$ choose $t'>t$ such that $t'\ast I(\chi^t)<a$. The set $V_t=\{y\mid t'\ast\chi^t(y)>\varphi(y)\}$ is an open neighborhood for each $x\in X$ with $\varphi(x)=t$.

Now we will choose an open neighborhood $W$ of the set $\varphi_m$.
Put $\xi=\min\{\eta\in[0,1]\mid\eta\ast m\le m\}$. We have $\xi\ast m=m$. Since $a\le m$, using arguments as before we can find $\gamma\in[0,1]$ such that $\gamma\ast m=a$. Then we have $\xi\ast a=\xi\ast \gamma\ast m=\gamma\ast m=a$. Choose $\lambda<\xi$ such that $b<\lambda\ast a$. Then there exists $\omega\in\Upsilon_{\varphi_{m\ast\lambda}}$ such that $m\ast\lambda\ast I(\omega)<\lambda\ast a$, hence  $m\ast I(\omega)< a$. We can assume that $\omega$ is comonotone with $\varphi$ by  Lemma 4 from \cite{Rad}. Since $m\ast\lambda<m$, the open set $W=\varphi^{-1}(m\ast\lambda,m]$ contains the set $\varphi_m$. Let us remark that $m\ast\omega(x)= m$ for each $x\in W$.

  We can choose a finite subcover $\{W,V_{t_1},\dots,V_{t_k}\}$ of the open cover $\{W\}\cup\{V_t\mid t\le m\}$. We can assume that $t_0=m\ast\lambda$ and $t_i\in [0,m)\setminus \{t_0\}$ for $i>0$.
 Then we have that the functions $t_i'\ast\chi^{t_i}$ and $\omega$ are pairwise comonotone, hence $I(\omega\bigvee(\bigvee_{i=1}^k\{t_i'\ast\chi^{t_i}\}))<a$. On the other hand $\varphi\le \omega\bigvee(\bigvee_{i=1}^k\{t_i'\ast\chi^{t_i}\})$ and we obtain a contradiction.
\end{proof}

\section {Monotonicity of functionals:  open problem.}

Following \cite{Grab} we say that a functional  $I:C(X,[0,1])\to[0,1]$ is comonotonically maxitive if it preserves preserves $\vee$ for  comonotone functions. We compare the properties of monotonicity  and comonotonical maxitivity in this section.  Results of Section 3 in particular demonstrate that monotonicity does not imply comonotonical maxitivity. It is easy to see that a comonotonically maxitive functional is monotone for  pairs of comonotone functions.  Theorem \ref{general} with Lemma 4 from \cite{Rad} implies that each normed, $\ast$-homogeneous respect a continuous t-norm and  comonotonically maxitive functional is monotone. The following theorem shows that the  first and the second conditions are superfluous for a finite compactum.

\begin{theorem}\label{finite1} Let $X$ be a finite set and a  functional $\mu:C(X,[0,1])=[0,1]^n\to[0,1]$ is comonotonically maxitive. Then $\mu$ is monotone.
\end{theorem}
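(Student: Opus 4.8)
The plan is to deduce full monotonicity from the (easy) fact noted just above the theorem, that a comonotonically maxitive functional is monotone on pairs of comonotone functions. Indeed, if $\alpha\le\beta$ are comonotone then $\alpha\vee\beta=\beta$, so $\mu(\beta)=\mu(\alpha\vee\beta)=\mu(\alpha)\vee\mu(\beta)$, whence $\mu(\alpha)\le\mu(\beta)$. Thus it suffices, given arbitrary $\varphi\le\psi$ in $[0,1]^n$, to interpolate between them by a finite increasing chain $\varphi=\chi_0\le\chi_1\le\dots\le\chi_m=\psi$ in which every two consecutive members are comonotone; applying comonotone monotonicity along the chain then yields $\mu(\varphi)\le\mu(\psi)$.

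To produce such a chain I would use the one-parameter family $\chi_t=\varphi\vee(\psi\wedge t_X)$ for $t\in[0,1]$. Since $\varphi\le\psi$, one checks $\chi_0=\varphi$ and $\chi_1=\psi$, and $t\mapsto\chi_t$ is pointwise nondecreasing. Coordinatewise, $\chi_t(x)=\max\{\varphi(x),\min\{\psi(x),t\}\}$, so for fixed $x$ the value $\chi_t(x)$ equals $\varphi(x)$ while $t\le\varphi(x)$, equals $t$ while $\varphi(x)\le t\le\psi(x)$, and equals $\psi(x)$ once $t\ge\psi(x)$. Let $0=\tau_0<\tau_1<\dots<\tau_m=1$ enumerate the finite set of breakpoints $\{0,1\}\cup\{\varphi(x),\psi(x)\mid x\in X\}$ (here finiteness of $X$ is used, to keep the chain finite), and set $\chi_i:=\chi_{\tau_i}$.

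The crux is to verify that consecutive functions $\chi_{\tau_i}$ and $\chi_{\tau_{i+1}}$ are comonotone. The point is that on a closed interval $[\tau_i,\tau_{i+1}]$ whose interior contains no breakpoint, each coordinate is frozen in a single regime: for every $x$ the function $\tau\mapsto\chi_\tau(x)$ is, throughout $[\tau_i,\tau_{i+1}]$, either the constant $\varphi(x)$, or the constant $\psi(x)$, or the identity $\tau$. Comparing two coordinates $x,y$ at $\tau_i$ and $\tau_{i+1}$ then splits into the cases where each is constant or equal to $\tau$; the only nontrivial case is when one coordinate is constant with value $v$ and the other equals $\tau$, and there one uses precisely that $v\in\{\varphi(y),\psi(y)\}$ is a breakpoint lying outside the open interval $(\tau_i,\tau_{i+1})$, so $v\le\tau_i$ or $v\ge\tau_{i+1}$, which forces $(\tau_i-v)$ and $(\tau_{i+1}-v)$ to share the same sign and hence $\chi_{\tau_i},\chi_{\tau_{i+1}}$ to be comonotone. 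This breakpoint bookkeeping is the one genuinely delicate step; the remaining cases are routine. With consecutive comonotonicity in hand, the chain inequalities $\mu(\chi_{\tau_0})\le\dots\le\mu(\chi_{\tau_m})$ give $\mu(\varphi)\le\mu(\psi)$, completing the proof.
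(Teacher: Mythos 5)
Your proof is correct, and while it shares the paper's overall strategy---reduce to monotonicity on comonotone pairs via $\mu(\beta)=\mu(\alpha\vee\beta)=\mu(\alpha)\vee\mu(\beta)$, then join the two functions by a finite increasing chain whose consecutive members are comonotone---the chain itself is constructed quite differently. The paper first permutes $X$ so that the larger function is nondecreasing, and then recursively lifts the smaller function coordinate by coordinate (starting with $\psi_1(x)=\max\{\psi(x),\psi(x_1)\}$ and flattening the tail at each step), producing a chain of length $n-1$ whose last member is nondecreasing and hence comonotone with the larger function. You instead use the canonical level-truncation family $\chi_t=\varphi\vee(\psi\wedge t_X)$ sampled at the finitely many breakpoints $\{\varphi(x),\psi(x)\mid x\in X\}\cup\{0,1\}$; your verification of consecutive comonotonicity (each coordinate frozen in one of three regimes on $[\tau_i,\tau_{i+1}]$, with the constant-versus-identity case harmless precisely because the constant is a breakpoint outside the open interval) is sound, modulo the harmless typo $v\in\{\varphi(y),\psi(y)\}$ where you mean $v\in\{\varphi(x),\psi(x)\}$ for the frozen coordinate $x$. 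Your construction buys coordinate-freeness---no permutation of $X$ is needed---and it isolates exactly where finiteness enters (finitely many breakpoints), whereas the paper's recursion gives a somewhat shorter chain ($n-1$ intermediate functions versus up to roughly $2n$) at the cost of ordering the domain. Both arguments fail for infinite compacta for the same reason, namely that the chain becomes infinite, which is exactly the open problem posed in Section 5.
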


\begin{proof} Let $X=\{x_1,\dots,x_n\}$ and $\varphi$, $\psi:X\to [0,1]$ are two functions such that $\psi\le \varphi$. We can assume that that the function $\varphi$ is nondecreasing  (if not, apply some permutation on $X$).

We  build recursively a finite sequence $\psi_1,\dots,\psi_{n-1}$ of functions as follows. Put $\psi_1(x)=\max\{\psi(x),\psi(x_1)\}$. For $i\in\{1,\dots,n-2\}$ we define the function $\psi_{i+1}$ by the formula
$$\psi(x_j)=\begin{cases}
\psi_i(x_j),&j\le i,\\
\max\{\psi_i(x_{i+1}),\psi_i(x_j)\},&j\ge i+1.\end{cases}$$

It is easy to check that $\psi\le\psi_1\le,\dots,\psi_{n-1}\le\varphi$, $\psi$ is comonotone to $\psi_1$,  $\psi_i$ is comonotone to $\psi_{i+1}$ and $\psi_{n-1}$ is nondecreasing, thus comonotone to $\varphi$. Hence we have $\mu(\psi)\le\mu(\psi_1)\le\dots\le \mu(\psi_{n-1})\le\mu(\varphi)$.
\end{proof}

But the problem is still open in the general case.

\begin{problem} Let $X$ be a compactum and a  functional $\mu:C(X,[0,1])\to[0,1]$ is comonotonically maxitive. Is $\mu$  monotone?
\end{problem}

\end{document}